\documentclass[12pt]{amsart}
\usepackage{amssymb}
\usepackage{amsfonts}
\usepackage{latexsym}
\usepackage{amscd}

\vfuzz2pt \hfuzz2pt

\addtolength{\textwidth}{4cm} \addtolength{\oddsidemargin}{-2cm} \addtolength{\evensidemargin}{-2cm} \textheight=22.15truecm

\newtheorem{lemma}{Lemma}
\newtheorem{corollary}[lemma]{Corollary}
\newtheorem{theorem}[lemma]{Theorem}
\newtheorem{proposition}[lemma]{Proposition}
\newtheorem{remark}[lemma]{Remark}

\newtheorem{definition}[lemma]{Definition}
\newtheorem{example}[lemma]{Example}

\newcommand\limind{\mathop{\oalign{\hfill $\rm lim$\hfill\cr$\longrightarrow$\cr}}}

\usepackage[all]{xy}

\input xy
\xyoption{all}

\begin{document}

\title[Row-finite equivalents  exist only for  row-countable graphs]{Row-finite equivalents exist only for \\ row-countable graphs}
\author{Gene Abrams and Kulumani M. Rangaswamy}
\address{Department of Mathematics, University of Colorado,
Colorado Springs CO 80933 U.S.A.} \email{abrams@math.uccs.edu, \ krangasw@math.uccs.edu}

\begin{abstract}
If $E$ is a not-necessarily row-finite graph, such that each vertex of $E$ emits at most countably many edges, then a {\it desingularization} $F$ of $E$ can be constructed as described in \cite{AA3} or \cite{R}.  The desingularization process has been effectively used to establish various characteristics of the Leavitt path algebras of not-necessarily row-finite graphs.    Such a desingularization $F$ of $E$ has the properties that:  (1) $F$ is row-finite, and (2) the Leavitt path algebras $L(E)$ and $L(F)$ are Morita equivalent.   We show here that for an arbitrary graph $E$, a graph $F$ having properties (1) and (2) exists (we call such a graph $F$ a \emph{row-finite equivalent of} $E$) if and only if $E$ is row-countable; that is, $E$ contains no vertex $v$ for which $v$ emits uncountably many edges.

\end{abstract}

 \subjclass[2000]{Primary 16S99}

 \keywords{ Leavitt path algebra, desigularization}

\maketitle

\begin{center}
 Dedicated to Ken Goodearl on the occasion of his 65th birthday.
 \end{center}

\bigskip

The notion of a Leavitt path algebra was originally defined and investigated for row-finite graphs (i.e., graphs for which each vertex emits at most finitely many edges); see e.g. \cite{AAP1} and \cite{AMP}.  Subsequently, the Leavitt path algebras of more general graphs were investigated in \cite{AA3}; more precisely, those graphs for which the vertices are allowed to emit an  infinite (but at most countably infinite) number of edges.  (We call such a graph  {\it row-countable}.)   One of the methods used in \cite{AA3} to establish various results in this more general situation is as follows: associate with  the given row-countable graph $E$ a row-finite graph $F$ (a so-called {\it desingularization} of $E$) for which the Leavitt path algebras $L(E)$ and $L(F)$ are  Morita equivalent, then apply known results about the Leavitt path algebras of row-finite graphs to conclude some structural property of  $L(F)$, then transfer this property back to $L(E)$ via the equivalence.

Subsequent to \cite{AA3},  the notion of a Leavitt path algebra has been investigated in settings where there are no restrictions placed on the cardinality of either the vertex set or edge set of the underlying graph $E$ (we refer to such as {\it unrestricted} graphs; these are called {\it uncountable}  graphs in \cite{G}).   Ken Goodearl's article  \cite{G} contains an overarching discussion  of the germane ideas which allow for the passing of information from countable  graphs to unrestricted graphs.
  See also, e.g.,  \cite{ABR}, \cite{AR1}, and \cite{Ibero} for additional  analyses of Leavitt path algebras of unrestricted  graphs.

Broadly speaking, here's a three step Procedure by which a number of properties of  Leavitt path algebras of unrestricted graphs have been established.

\medskip

Step 1: \  Establish the property for the Leavitt path algebras of row-finite graphs.

 \smallskip

Step 2: \  Use the aforementioned desingularization process  to realize up to Morita equivalence the Leavitt path algebra  of a row-countable graph as the Leavitt path algebra of a row-finite graph.  Then show that the property in question is preserved by Morita equivalence.

  \smallskip

 Step  3: \ Use \cite[Proposition 2.7]{G} to realize the Leavitt path algebra of an unrestricted  graph as the direct limit of subalgebras, each of which arises as  the Leavitt path algebra of an appropriate type of   row-countable subgraph.     (``Appropriate" here means a {\it CK-subgraph}, see \cite[2.3]{G}).  Then show that the property in question is preserved by direct limits.     (This step of the Procedure is described as part of the \textsc{Modus Operandi} in \cite[Section 3]{G}.)

\medskip

 Specific representative examples of how this three step Procedure has been played out in full can be found, for example,  in the verification that the monoid of finitely generated projective left modules over any Leavitt path algebra is unperforated (see \cite[Theorem 5.8]{G}), as well as in the verification of the {\it Cuntz Krieger Uniqueness Theorem} (see \cite[Theorem 3.6]{G}).

As an aside, we note that  many properties of row-finite graphs (Step 1 of the Procedure) are in fact themselves established by an analysis similar to Step 3 of the Procedure, as follows:  one first verifies a property for finite graphs, and then establishes the same property for row-finite graphs by realizing   a Leavitt path algebra of a row-finite graph as the direct limit of subalgebras, each of which is isomorphic to the Leavitt path algebra of a finite graph (using  \cite[Lemma 3.2]{AMP}), and showing that the property in question is preserved by direct limits.

The row-countable graphs play two roles in the Procedure:  they are ``small" enough so that they can be analyzed using results about row-finite graphs (Step 2), while they are simultaneously ``ubiquitous" enough to cover via direct limits any Leavitt path algebra (Step 3).    It is then natural to ask whether both of these   intermediary roles of the row-countable graphs are in fact necessary ingredients to complete the Procedure.  That is, might it be possible to collapse Steps 2 and 3 into a single step?   More formally, we  ask

\medskip

 Question 1:  Can we realize the Leavitt path algebra of an unrestricted graph as the direct limit of subalgebras, each of which  arises as the Leavitt path algebra of a row-{\it finite} CK-subgraph of the original graph?  (Rephrased:  Can we bypass Step 2?) \ \
 and

 \smallskip

 Question 2:   Is there a process by which we may  realize up to Morita equivalence the Leavitt path algebra  of an {\it unrestricted} graph as the Leavitt path algebra of a row-finite graph?  (Rephrased:  Can we bypass Step 3?)

 \medskip

 It is well-known that the answer to Question 1 is {\it no}.  For instance, the Leavitt path algebra of the graph having  one vertex and infinitely many loops at that vertex provides a counterexample.

The primary goal of this short note is to show that the answer to Question 2 is {\it no} as well.     More precisely,  in  Theorem \ref{MainTheorem}  we show that for a given graph $E$, there exists a row-finite graph $F$ for which  $L(E)$ is Morita equivalent to $L(F)$ if and only if $E$ is row-countable.


\medskip

We recall that a graph $E = (E^0, E^1, r,s)$ has vertex set $E^0$, edge set $E^1$, and source and range functions $s,r$ respectively.  We call a vertex $v\in E^0$ \emph{regular} in case $1 \leq |s^{-1}(v)| < \infty$; otherwise, $v$ is called \emph{singular}.   The singular vertices consist of the \emph{sinks} (i.e., vertices which emit no edges) and the \emph{infinite emitters} (i.e., vertices which emit infinitely many edges).  An infinite emitter is   \emph{countable} (resp., \emph{uncountable}) according to whether the set of edges $s^{-1}(v)$ is countably infinite (resp., uncountably infinite).  The graph $E$ is called {\it row-finite} (resp., {\it row-countable}) in case $E$ contains no infinite (resp., uncountable) emitters.   Additional germane definitions and various notation may be found in the  previously cited works.

Our focus in this note is on $L_K(E)$, the Leavitt path
algebra
of $E$. We define $L_K(E)$ and give a
few examples.

\begin{definition}\label{definition}  {\rm Let $E$ be any
directed graph, and $K$ any field.
The {\em Leavitt path algebra} $L_K(E)$ {\em of $E$ with
coefficients in $K$} is
the $K$-algebra generated by a set $\{v \mid v\in E^0\}$ of
pairwise orthogonal idempotents,
together with a set of variables $\{e,e^* \mid e \in E^1 \}$,
which satisfy the following
relations:

(1) $s(e)e=er(e)=e$ for all $e\in E^1$.

(2) $r(e)e^*=e^*s(e)=e^*$ for all $e\in E^1$.

(3) (The ``CK1 relations") \ $e^*e'=\delta _{e,e'}r(e)$ for all
$e,e'\in E^1$.

(4) (The ``CK2 relations") \ For every nonsingular vertex $v$ of $E$, $v=\sum _{\{ e\in E^1\mid s(e)=v
\}}ee^*$.
}
\end{definition}

We will sometimes denote $L_K(E)$ simply by $L(E)$ for
notational convenience.
The set
$\{e^*\mid e\in E^1\}$ will be denoted by $(E^1)^*$. We let
$r(e^*)$
denote $s(e)$, and we let $s(e^*)$ denote $r(e)$. If $\mu =
e_1
\dots e_n$ is a path, then we denote by $\mu^*$ the element
$e_n^*
\dots e_1^*$ of $L_K(E)$.  We  view  paths $\mu$ in $E$ as
elements of $L_K(E)$, and often refer such a path as a  {\it
real} path, to distinguish it from elements of the form
$\mu^*$ of $L_K(E)$, which we refer to as {\it ghost} paths.

Many well-known algebras arise as the Leavitt path algebra of
a graph.
For example, the classical Leavitt $K$-algebra $L_K(1,n)$ for
$n\ge 2$; the full $n\times n$ matrix algebra ${\rm M}_n(K)$
over $K$; and the Laurent polynomial
algebra $K[x,x^{-1}]$ arise, respectively, as the Leavitt path
algebras of the ``rose with $n$ petals" graph $R_n$ ($n\geq
2$); the oriented line graph $A_n$ having $n$ vertices; and the
``one vertex, one loop" graph $R_1$ pictured here.

$$R_n \ =  \xymatrix{ & {\bullet^v} \ar@(ur,dr) ^{e_1}
\ar@(u,r) ^{e_2}
\ar@(ul,ur) ^{e_3} \ar@{.} @(l,u) \ar@{.} @(dr,dl) \ar@(r,d)
^{e_n}
\ar@{}[l] ^{\ldots} } \ \  \ \ \ \  \ A_n \ = \
\xymatrix{{\bullet}^{v_1} \ar [r] ^{e_1} & {\bullet}^{v_2}
\ar@{.}[r] & {\bullet}^{v_{n-1}} \ar [r]
^{e_{n-1}} & {\bullet}^{v_n}} \ \ \ \ \  \ R_1 \ = \
\xymatrix{{\bullet}^{v} \ar@(ur,dr) ^x}$$

\begin{definition}
{\rm Let $E   = (E^0,E^1,r,s)$ be an unrestricted directed graph (i.e., there is no restriction placed on the cardinalities of the vertex set $E^0$ or the edge set $E^1$).   By a {\it row-finite equivalent} of $E$ we mean a directed graph $F$ for which:
 \begin{enumerate}
 \item  $F$ is row-finite, and
 \item  the Leavitt path algebras $L(E)$ and $L(F)$ are Morita equivalent.
  \end{enumerate}
  }
  \end{definition}


For any edge $e\in E^1$ it is always the case that $ee^*$ is an idempotent in $L(E)$, and that if $e\neq f \in E^1$ then $ee^*$ and $ff^*$ are orthogonal.   The following is thereby straightforward.

 \begin{lemma}\label{orthogidemp}
    Suppose $w$ is an uncountable emitter in $E$.
   Let the edges being emitted at $w$ be denoted
  by $\{e_{\alpha}|\alpha \in A\}$.
 Then $\{e_\alpha e_\alpha^* | \alpha \in A\}$ is an uncountable set of pairwise orthogonal idempotents in $wL(E)w$.
   Rephrased, the set $\{e_\alpha e_\alpha^* | \alpha \in
 A\}$ is an uncountable set of pairwise orthogonal idempotents in ${\rm End}_{L(E)}(L(E)w)$.
\end{lemma}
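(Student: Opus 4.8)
The plan is to read off everything from the defining relations of $L(E)$ together with the standard identification of a corner ring with a module endomorphism ring; since $w$ is an uncountable emitter, the family $\{e_\alpha \mid \alpha \in A\}$ is indexed by an uncountable set, so the whole content is to show that $e_\alpha \mapsto e_\alpha e_\alpha^*$ produces pairwise orthogonal, mutually distinct idempotents sitting in $wL(E)w$. Recall the relations $s(e)e = e$, $e\,r(e) = e$, $e^* s(e) = e^*$, $r(e)e^* = e^*$, and the Cuntz--Krieger relation $e^* f = \delta_{e,f}\, r(e)$ for $e,f \in E^1$. First I would verify the two assertions already flagged in the text preceding the statement: $(e_\alpha e_\alpha^*)^2 = e_\alpha (e_\alpha^* e_\alpha) e_\alpha^* = e_\alpha\, r(e_\alpha)\, e_\alpha^* = e_\alpha e_\alpha^*$ shows idempotency, while for $\alpha \neq \beta$ the relation $e_\alpha^* e_\beta = 0$ gives $(e_\alpha e_\alpha^*)(e_\beta e_\beta^*) = 0$, and symmetrically in the reverse order, so distinct idempotents are orthogonal.

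Next I would locate these idempotents in the corner $wL(E)w$. Because every $e_\alpha$ satisfies $s(e_\alpha) = w$, the relations $w e_\alpha = e_\alpha$ and $e_\alpha^* w = e_\alpha^*$ give $w (e_\alpha e_\alpha^*) w = e_\alpha e_\alpha^*$, so indeed $e_\alpha e_\alpha^* \in wL(E)w$. For the cardinality claim the key point is that the assignment $\alpha \mapsto e_\alpha e_\alpha^*$ is injective, which follows once we know each idempotent is nonzero: from $e_\alpha^*(e_\alpha e_\alpha^*) = r(e_\alpha) e_\alpha^* = e_\alpha^* \neq 0$ we get $e_\alpha e_\alpha^* \neq 0$, and then $e_\alpha e_\alpha^* = e_\beta e_\beta^*$ with $\alpha \neq \beta$ would force $e_\alpha e_\alpha^* = (e_\alpha e_\alpha^*)(e_\beta e_\beta^*) = 0$ by orthogonality, a contradiction. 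Since $|A| = |s^{-1}(w)|$ is uncountable, $\{e_\alpha e_\alpha^* \mid \alpha \in A\}$ is an uncountable set of pairwise orthogonal idempotents in $wL(E)w$.

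Finally, for the rephrased version I would transport the statement across the standard correspondence between the corner $wL(E)w$ and $\End_{L(E)}(L(E)w)$, viewing $L(E)w$ as a left $L(E)$-module: sending $x \in wL(E)w$ to right multiplication $\r_x\colon aw \mapsto awx$ gives a bijection $wL(E)w \to \End_{L(E)}(L(E)w)$, with inverse $\phi \mapsto \phi(w)$. The one point that requires care --- the \emph{only} genuinely non-mechanical step --- is the direction of composition: this map is in fact a ring \emph{anti}-isomorphism ($\r_x \circ \r_y = \r_{yx}$), not an isomorphism, for a left module with the usual composition convention. This is harmless here, however, because an anti-isomorphism still carries idempotents to idempotents and orthogonal pairs to orthogonal pairs and is a bijection; hence the image of $\{e_\alpha e_\alpha^* \mid \alpha \in A\}$ is an uncountable set of pairwise orthogonal idempotents inside $\End_{L(E)}(L(E)w)$, which is exactly the rephrased conclusion.
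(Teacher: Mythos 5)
Your proof is correct and fills in exactly the details the paper leaves implicit: the paper offers no written proof, only the preceding remark that $ee^*$ is idempotent and that $ee^*$, $ff^*$ are orthogonal for $e\neq f$, from which it declares the lemma ``straightforward.'' Your verification of idempotency and orthogonality via $e^*f=\delta_{e,f}\,r(e)$, the nonvanishing/injectivity step giving uncountability, and the careful note that $wL(E)w\to{\rm End}_{L(E)}(L(E)w)$ is an anti-isomorphism (which is harmless here) together constitute precisely the intended argument.
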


We now proceed to show that if $F$ is a row-finite graph (we emphasize that $F$ {\it is}
 allowed to have uncountably many vertices and / or edges), then there is no finitely generated projective
left $L(F)$-module whose endomorphism ring contains an uncountable set of pairwise orthogonal idempotents.


For any ring $R$ we denote by $\mathcal{V}(R)$ the semigroup of isomorphism classes of finitely generated projective left $R$-modules, with operation $\oplus$.  For any graph $F$, the semigroup $M_F$ is defined as the abelian semigroup generated by $\{a_v | v \in F^0\}$, with relations given by
 $$a_v = \sum_{e \in s^{-1}(v)}a_{r(e)}$$
 for each nonsingular vertex $v$ of $F$.

\begin{proposition}\label{Threeresultsprop}
\begin{enumerate}
\item Let $F$ be row-finite (but possibly with uncountably many vertices and~/~or  edges). Then there is an isomorphism of semigroups $\varphi: M_F \rightarrow \mathcal{V}(L(F))$.

\item Let $E$ and $F$ be unrestricted graphs.  If $\Phi: L(E)Mod \rightarrow L(F)Mod$ is a Morita equivalence,
and $P\in \mathcal{V}(L(E))$, then $\Phi(P)\in \mathcal{V}(L(F))$.

\item  Let $F$ be  row-finite.  If $\Phi: L(E)Mod \rightarrow L(F)Mod$ is a Morita equivalence, then for each $w\in E^0$ there is
  an isomorphism of left $L(F)$-modules  $\Phi(L(E)w)\cong \oplus_{i=1}^n L(F)v_i$ for
some sequence $v_1,v_2,...,v_n$ of (not necessarily distinct) vertices of $F$.

\end{enumerate}
\end{proposition}

\begin{proof}   (1)   Since any row-finite graph is the direct limit of its finite CK-subgraphs (see e.g. \cite[Lemmas 3.1, 3.2]{AMP} or \cite[Proposition 2.6]{G}), the proof is identical to that given in \cite[Theorem 3.5]{AMP}.

(2)    This is established in  \cite [Corollary 5.6]{G}.

 (3) By (2),  $\Phi(L(E)w)$ is in $\mathcal{V}(L(F))$.
But by (1),  each object in $\mathcal{V}(L(F))$ is isomorphic to an
 $L(F)$-module of the indicated type.
\end{proof}


 We now establish some properties of $L(F)$ for row-finite graphs $F$.

\begin{proposition}\label{countablymanypqstar}
  Let $F$ be row-finite, and let $v,v'\in F^0$.   Then there are at most countably
 many distinct expressions of the form $pq^*$ in $L(F)$ for which $s(p)=v, r(q^*)=v',$ and $r(p)=r(q)$.
\end{proposition}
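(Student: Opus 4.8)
The plan is to reduce counting the expressions $pq^*$ to counting pairs of finite paths, and then to exploit row-finiteness to show that the set of finite paths issuing from a single vertex is countable.

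First I would note that an expression $pq^*$ of the stated form is completely determined by the ordered pair $(p,q)$, where $p$ is a (possibly trivial) path with $s(p)=v$ and $q$ is a (possibly trivial) path with $s(q)=r(q^*)=v'$, subject to $r(p)=r(q)$. Writing $\mathcal{P}(u)$ for the set of all finite paths of $F$ having source $u$, the collection of such expressions is thus the image, under the assignment $(p,q)\mapsto pq^*$, of a subset of $\mathcal{P}(v)\times\mathcal{P}(v')$. It therefore suffices to show that each $\mathcal{P}(u)$ is countable.

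The crux is this countability of $\mathcal{P}(u)$, and it is precisely here that row-finiteness enters. Let $P_n(u)$ denote the set of paths of length exactly $n$ with source $u$; I would argue by induction on $n$ that each $P_n(u)$ is finite. The base case $P_0(u)=\{u\}$ is immediate, and any path in $P_{n+1}(u)$ arises by appending to some $p\in P_n(u)$ an edge $e$ with $s(e)=r(p)$. Since $F$ is row-finite, each such vertex $r(p)$ emits only finitely many edges, so $|P_{n+1}(u)|\le\sum_{p\in P_n(u)}|s^{-1}(r(p))|$ is a finite sum of finite numbers, hence finite. Thus $\mathcal{P}(u)=\bigcup_{n\ge0}P_n(u)$ is a countable union of finite sets, and so is countable.

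With both $\mathcal{P}(v)$ and $\mathcal{P}(v')$ countable, their product and any subset thereof is countable, and hence the image under $(p,q)\mapsto pq^*$ --- which is exactly the set of expressions in question --- is at most countable. I expect no real obstacle: the argument is an elementary counting, and the only point requiring care is to confirm that the finiteness of $P_n(u)$ genuinely relies on row-finiteness (it fails at once if some vertex along a path emits infinitely many edges) and that the trivial paths are included, so that the degenerate cases in which $p$ or $q$ is a vertex are not overlooked.
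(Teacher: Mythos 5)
Your proposal is correct and follows essentially the same route as the paper: both reduce the count to pairs of paths emanating from $v$ and $v'$, and both use row-finiteness to see that there are only finitely many paths of each fixed length from a given vertex, hence countably many paths in all. Your explicit induction on the length $n$ merely fills in a step the paper states without proof.
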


\begin{proof}
  Because $F$ is row-finite, for any positive integer $N$ and any vertex $v$ there exists at most finitely many
 distinct paths of length $N$ which emanate
from $v$. So there are at most countably many distinct paths in $F$ which emanate from $v$.  Similarly there are at most countably
 many distinct (real) paths which emanate from $v'$, so that there are at most countably many ghost paths of the form $q^*$ having $r(q^*)=s(q)=v'$.
Now any nonzero expression of the form $pq^*$ corresponds to a pair of directed  paths $p$ and $q$
 for which $s(p)=v, r(q^*)=v'$, and $r(p)=r(q)$, and the result follows.
\end{proof}

\begin{corollary}\label{countabledimensionvL(F)v}
 Let $F$ be row-finite, and let   $v,v'\in F^0$.
Then ${\rm dim}_K(vL_K(F)v')$ is at most countable.
\end{corollary}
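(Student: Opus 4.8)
The plan is to deduce the corollary directly from Proposition \ref{countablymanypqstar} together with the standard spanning-set description of a Leavitt path algebra. Recall that $L_K(F)$ is spanned as a $K$-vector space by the collection of monomials $pq^*$, where $p$ and $q$ range over the directed paths of $F$ satisfying $r(p)=r(q)$ (with vertices regarded as paths of length zero). I would begin by invoking this fact, which is one of the basic structural properties of Leavitt path algebras.

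Next I would analyze the effect of multiplying such a monomial on the left by the vertex idempotent $v$ and on the right by the vertex idempotent $v'$. Using the defining relations of $L_K(F)$, one has $v\cdot pq^* = pq^*$ when $s(p)=v$ and $v\cdot pq^* = 0$ otherwise; symmetrically, $pq^*\cdot v' = pq^*$ when $r(q^*)=v'$ (equivalently $s(q)=v'$) and $pq^*\cdot v' = 0$ otherwise. Consequently the corner $vL_K(F)v'$ is spanned over $K$ by precisely those monomials $pq^*$ for which $s(p)=v$, $r(q^*)=v'$, and $r(p)=r(q)$.

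Now I would apply Proposition \ref{countablymanypqstar}, which asserts that there are at most countably many distinct expressions $pq^*$ subject to exactly these three conditions. Hence $vL_K(F)v'$ admits a spanning set of at most countable cardinality, and since the dimension of a vector space is bounded above by the cardinality of any spanning set, it follows that ${\rm dim}_K(vL_K(F)v')$ is at most countable.

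This argument is essentially immediate once Proposition \ref{countablymanypqstar} is available, so there is no substantial obstacle. The only points requiring care are the bookkeeping in the second step, namely checking that passing to the corner $vL_K(F)v'$ imposes exactly the source-and-range conditions appearing in the hypotheses of Proposition \ref{countablymanypqstar}, and the observation that a spanning set (as opposed to a basis) already suffices to bound the dimension from above.
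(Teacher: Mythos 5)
Your proposal is correct and follows essentially the same route as the paper: both arguments pass from the standard spanning set $\{pq^* \mid r(p)=r(q)\}$ of $L_K(F)$ to the induced spanning set of the corner $vL_K(F)v'$ and then invoke Proposition \ref{countablymanypqstar}. Your version merely spells out the corner-multiplication bookkeeping that the paper leaves implicit.
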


\begin{proof}
  As a $K$-space, $L_K(F)$ is spanned by expressions of the form
$$\{pq^* \ | \ p,q \mbox{ are paths in } F \mbox{ with } r(p)=r(q)\}.$$
(This set is typically not linearly independent, but that is not of concern here.)  Then
 $vL_K(F)v'$ is spanned by expressions of the form
 $$\{pq^* \ | \ p,q \mbox{ are paths in } F \mbox{ with } s(p)=v, s(q)=r(q^*)=v' \mbox{ and }r(p)=r(q)\}.$$  The result
  now follows from Proposition \ref{countablymanypqstar}.
\end{proof}

  \begin{corollary}\label{rowfinitegivescountabledim}

  Let $F$ be row-finite, and let $v_1,v_2,...,v_n$ be a sequence of (not necessarily distinct) vertices of $F$.   Then the $K$-dimension
 of the $K$-algebra ${\rm End}_{L(F)}(\oplus_{i=1}^n L(F)v_i)$ is at most countable.
\end{corollary}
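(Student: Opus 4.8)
The plan is to reduce the computation to the Hom-spaces already bounded in Corollary \ref{countabledimensionvL(F)v}. First I would invoke the standard decomposition of the endomorphism ring of a finite direct sum: as a $K$-space,
$${\rm End}_{L(F)}\left(\bigoplus_{i=1}^n L(F)v_i\right) \cong \bigoplus_{i=1}^n \bigoplus_{j=1}^n {\rm Hom}_{L(F)}(L(F)v_i, L(F)v_j),$$
the familiar ``matrix'' description in which an endomorphism is recorded by its components between the summands.

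Next I would identify each Hom-space with a corner of $L(F)$. For idempotents $e,f$ in a ring $R$, the map $\phi \mapsto \phi(e)$ furnishes a $K$-linear isomorphism ${\rm Hom}_R(Re, Rf) \cong eRf$: indeed $\phi(e) = e\phi(e)$ since $\phi$ is left $R$-linear, and $\phi(e) = \phi(e)f$ since $\phi(e) \in Rf$, so $\phi(e) = e\phi(e)f \in eRf$; the inverse sends $x \in eRf$ to the map $re \mapsto rx$, which is well defined because $x = ex$. This argument uses only that $v_i, v_j$ are idempotents in $L(F)$, so it applies verbatim even when $L(F)$ is non-unital (i.e. when $F^0$ is infinite), where one works with the local units supplied by the vertices. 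Taking $e = v_i$ and $f = v_j$ gives ${\rm Hom}_{L(F)}(L(F)v_i, L(F)v_j) \cong v_i L(F) v_j$.

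Finally I would apply Corollary \ref{countabledimensionvL(F)v} to each of the $n^2$ summands to conclude that every $\dim_K(v_i L(F) v_j)$ is at most countable. Since a finite direct sum of $K$-spaces, each of at most countable dimension, again has at most countable dimension, the desired bound on $\dim_K {\rm End}_{L(F)}(\oplus_{i=1}^n L(F)v_i)$ follows.

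I do not anticipate a serious obstacle here; the one point requiring a little care is the endomorphism-ring identification in the non-unital setting, but the explicit inverse given above settles it. The substantive work has already been carried out in Proposition \ref{countablymanypqstar} and its corollary, so this statement is essentially a bookkeeping consequence of them.
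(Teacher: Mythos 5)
Your proof is correct and follows essentially the same route as the paper: both identify ${\rm End}_{L(F)}(\oplus_{i=1}^n L(F)v_i)$ with the $n\times n$ generalized matrix ring whose $(i,j)$ entry is $v_iL(F)v_j$ and then invoke Corollary \ref{countabledimensionvL(F)v} on each of the finitely many corners. The only difference is that you spell out the non-unital verification of ${\rm Hom}_{L(F)}(L(F)v_i,L(F)v_j)\cong v_iL(F)v_j$, which the paper dismisses as ``standard.''
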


\begin{proof}   Since each $v_i$ is idempotent in $L(F)$, it is standard that as a ring we have
$${\rm End}_{L(F)}(\oplus_{i=1}^n L(F)v_i)\cong R,$$
 where $R$ is the $n\times n$
 matrix ring having $(i,j)^{{\rm th}}$ entry $R_{i,j}=v_iL(F)v_j$ for each pair $1\leq i,j \leq n$.   This isomorphism is clearly seen to be a $K$-algebra map as well.   Since
  ${\rm dim}_K(R_{i,j})$ is at most countable for each pair $i,j$ by Corollary \ref{countabledimensionvL(F)v}, the result follows.
\end{proof}

\begin{lemma}\label{dimensionatleastcard(S)}
  If $B$ is any $K$-algebra, and $B$ contains a set $S$ of nonzero orthogonal idempotents, then ${\rm dim}_K(B) \geq {\rm card}(S)$.
\end{lemma}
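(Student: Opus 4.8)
The plan is to show that the idempotents comprising $S$ are themselves $K$-linearly independent elements of $B$; since any linearly independent subset of $B$ has cardinality at most $\dim_K(B)$, this immediately yields ${\rm dim}_K(B) \geq {\rm card}(S)$. Write $S = \{e_\alpha \mid \alpha \in A\}$, so that each $e_\alpha$ is nonzero, $e_\alpha^2 = e_\alpha$, and $e_\alpha e_\beta = 0$ whenever $\alpha \neq \beta$.

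To establish linear independence, I would start with an arbitrary finite $K$-linear relation
$$\sum_{i=1}^n c_i e_{\alpha_i} = 0$$
among distinct idempotents $e_{\alpha_1}, \ldots, e_{\alpha_n}$ drawn from $S$, with coefficients $c_i \in K$. The key maneuver is to multiply this relation on the left by a fixed $e_{\alpha_j}$ and invoke orthogonality: every cross term $e_{\alpha_j} e_{\alpha_i}$ with $i \neq j$ vanishes, while $e_{\alpha_j} e_{\alpha_j} = e_{\alpha_j}$, so the relation collapses to $c_j e_{\alpha_j} = 0$. Since $e_{\alpha_j} \neq 0$, we conclude $c_j = 0$; as $j$ was arbitrary, all the coefficients vanish. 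Hence $\{e_\alpha \mid \alpha \in A\}$ is a $K$-linearly independent subset of $B$.

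The conclusion is then immediate from the definition of dimension, since a $K$-vector space containing a linearly independent subset of a given cardinality has dimension at least that cardinality. I do not anticipate any real obstacle here; the argument is purely formal. The only point worth flagging is that $B$ need not be unital and the $e_\alpha$ need not sum to any identity element, but the computation above uses neither fact, relying solely on pairwise orthogonality together with the nonvanishing of each idempotent.
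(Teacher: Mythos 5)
Your proposal is correct and is essentially identical to the paper's argument: both establish linear independence of the idempotents by multiplying an arbitrary finite relation by one of the $e_{\alpha_j}$ and using orthogonality to isolate $c_j e_{\alpha_j}=0$ (the paper multiplies on the right rather than the left, which is immaterial). Your added remark that unitality plays no role is a sensible observation but does not change the substance.
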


  \begin{proof}
  Suppose $\sum_{i=1}^n k_ie_i = 0$ with $k_i\in K$ and $e_i\in S$.   Then by hypothesis each $e_i\neq 0$, and $e_ie_j = \delta_{ij}e_i$ for all
 $i,j$.   So multiplying the given equation on the right by $e_i$ gives $k_ie_i = 0$, whence $k_i = 0$ and we are done.
\end{proof}

Putting all the pieces of the puzzle together, we now have the tools to conclude

   \begin{proposition}\label{onedirection}
   Suppose $E$ is not row-countable.   Then $E$ admits no row-finite equivalent.
\end{proposition}

\begin{proof}
Let $F$ be a row-finite graph.  By Corollary \ref{rowfinitegivescountabledim}, for any sequence $v_1,v_2,...,v_n$ of vertices of $F$, ${\rm End}_{L(F)}(\oplus_{i=1}^n L(F)v_i)$ has at most countable $K$-dimension.  So, by Lemma  \ref{dimensionatleastcard(S)}, ${\rm End}_{L(F)}(\oplus_{i=1}^n L(F)v_i)$ cannot contain an uncountable set of nonzero orthogonal idempotents.

 Now arguing to the contrary, suppose $\Phi: L(E)Mod \rightarrow L(F)Mod$ is a Morita equivalence. Let $w$ denote an uncountable emitter in $E$.
  Then, by Proposition \ref{Threeresultsprop}(3),
$\Phi(L(E)w) \cong \oplus_{i=1}^n L(F)v_i$
 for some vertices $v_1,v_2,...,v_n$ of $F$.   As Morita equivalence preserves endomorphism rings,  this would yield
 ${\rm End}_{L(E)}(L(E)w)\cong {\rm End}_{L(F)}(\oplus_{i=1}^n L(F)v_i)$.
   But as noted in Lemma \ref{orthogidemp}, ${\rm End}_{L(E)}(L(E)w)$ contains an uncountable set of orthogonal idempotents,   while  ${\rm End}_{L(F)}(\oplus_{i=1}^n L(F)v_i)$ does not.
  \end{proof}

Proposition \ref{onedirection} establishes one direction of our main result.  We now review the appropriate constructions which allow us to build row-finite equivalents.   The germane ideas  appear in \cite{DT} and \cite{AA3}.

 \begin{definition}
 {\rm If $v_0$ is a countable  emitter in $E$, then by \emph{adding a tail at $v_0$} we mean a process by which we modify the graph $E$, as follows.   We first order the edges $e_1, e_2, e_3, \ldots$ of $s^{-1}(v_0)$. Then we add new vertices $v_1,v_2, ...$ and new edges $f_1,f_2,...$  to $E$ at $v_0$
as pictured here:
$$\xymatrix{ {\bullet}^{v_0} \ar[r]^{f_1} & {\bullet}^{v_1} \ar[r]^{f_2} & {\bullet}^{v_2} \ar[r]^{f_3} &
{\bullet}^{v_3} \ar@{.>}[r] & }$$
Next, we remove the original set of edges  $e_1,e_2,e_3,...$ from the graph.  Finally,  for each  removed edge $e_j$, we add a new
edge $g_j$ having  $s(g_j) = v_{j-1}$ and $r(g_j) = r(e_j)$.
\hfill $\Box$}
\end{definition}

We note that the countability of $s^{-1}(v_0)$ allows for the construction of a sequence of edges and vertices (as displayed above) for which, given any two vertices $v_i$ and $v_j$ with $i\leq j$, there is a unique path $p_{i,j}$  having $s(p_{i,j})=v_i$ and $r(p_{i,j}) = v_j$.   Such a configuration would not be possible if $s^{-1}(v_0)$ were uncountable.  This distinction will manifest later in our main result.

We also note (for later use) that in a tail added at a countable emitter $v_0$, the CK2 relation in the new graph yields $v_{m-1} = f_mf_m^* + g_mg_m^*$, so that $f_mf_m^* = v_{m-1} - g_mg_m^*$ for each $m\geq 1$.

\begin{example}\label{desingexample}
{\rm Let  $R_\infty$ denote the {\it infinite rose} graph having one vertex $v_0$  and countably many loops $\{e_i\}$ at $v_0$.  Then adding a tail at $v_0$ yields the new graph
$$\xymatrix{
 {\bullet}^{v_0} \ar[r]^{f_1} \ar@(ul,dl)_{g_1}[] &
  {\bullet}^{v_1} \ar[r]^{f_2} \ar@(d,d)[l]^{g_2}
   & {\bullet}^{v_2} \ar[r]^{f_3} \ar@(d,d)[ll]^{g_3}
    & {\bullet}^{v_3} \ar@{.>}[r] \ar@(d,d)@{.>}[lll] &
    &
}$$
}
\end{example}

\begin{example}\label{infiniteedgesexample}
{\rm Let $E_\infty$ denote the {\rm infinite edges graph}
$$\xymatrix{{\bullet}^v  \ar[r]^{(\infty)} & {\bullet}^w}$$ where the label $(\infty)$ denotes the infinite set of edges
$E^1=\{e_i\mid i\ge 1\}$ with $s(e_i)=v$ and $r(e_i)=w$.
 Then adding a tail at $v$ yields the new graph
$$\xymatrix{ {\bullet}^{v_0} \ar[r]^{f_1} \ar[d]_{g_1} & {\bullet}^{v_1} \ar[r]^{f_2} \ar[dl]_{{g_2}} & {\bullet}^{v_2} \ar[r]^{f_3} \ar[dll]_{g_3} &
               {\bullet}^{v_3} \ar@{.>}[r] \ar@{.>}[dlll] &  \\
             {\bullet}^{w}  &  &  &  & }$$
}
\end{example}

\begin{remark}
 {\rm  In general, as noted in \cite{DT},   different orderings of the edges of a countable emitter $s^{-1}(v_0)$ in a graph $E$ may give rise to nonisomorphic graphs via the process of adding a tail at $v_0$.
}
\end{remark}

We are now in position to establish the main result of this note.

  \begin{theorem}\label{MainTheorem}  Let $E$ be an unrestricted graph. Then  $E$ admits a row-finite equivalent if and only if  $E$ is row-countable.
  \end{theorem}

  \begin{proof}
 If $E$ is not row-countable, then $E$ admits no row-finite equivalent by Proposition \ref{onedirection}.

So suppose that $E$ is row-countable; we produce a row-finite equivalent for $E$.
  To do so we use the process  described in \cite[Theorem 5.2]{AA3} as a guide.
Specifically, let $F$ be a row-finite graph constructed from $E$ by adding a tail at each infinite   emitter of $E$, as described above.   (We use here the hypothesis that each infinite emitter is in fact a countable emitter.)   By identifying each infinite emitter $v$ in $E$ with the corresponding vertex $v_0$ of $F$, we may view $E^0$ as a subset of $F^0$.

 By \cite[Proposition 5.1]{AA3},
there exists a monomorphism of algebras $\phi:L(E)\hookrightarrow L(F)$, defined as follows.  If $v\in E^0$ we have two cases. If $v$ is not an infinite emitter, then $v\in F$ as well, and we define
$\phi(v)=v$. If $v$ is an infinite emitter,  then $v$ has been
replaced in $F$ by an infinite tail beginning with $v_0$, so we define in this case $\phi(v)=v_0$.
Now consider $e\in E^1$. If $s(e)$ is not an infinite emitter then we set $\phi(e)=e$, and $\phi(e^*)=e^*$. In contrast, if
$s(e)$ is an infinite emitter, then when adding a tail at $s(e)$ in the construction of $F$ we would have named $e$ as $e_i$ for
some $i\ge 1$.   In this situation, we define  $\phi(e_i)=f_1\dots f_{i-1}g_i$, and $\phi(e^*_i)=g^*_i f^*_{i-1}\dots f^*_1$.
We extend $\phi$ linearly and multiplicatively to all of $L(E)$ to achieve the desired homomorphism.   That $\phi$ is a monomorphism is established in \cite[Proposition 5.1]{AA3}.

Recall that $L(E)$ has the collection of sums of distinct vertices as a set of local units.  In other words, if we label the vertices $E^0=\{v_\alpha | \alpha \in A\}$,
 then the set of idempotents
 $$T = \{\sum_{j\in A_i}v_j \ | \  A_i \mbox{ is a finite subset of }A\}$$
  is a set of local units for $L(E)$.  Since $E^0 \subseteq F^0$, we may view the elements of $T$ as elements of $L(F)$ as well.

 We pick an arbitrary element $t \in T$, and establish that $tL(E)t\cong tL(F)t$.
   Suppose $t =\sum_{j\in A_x}v_j$ for the finite subset $A_x$ of $A$.
    We consider the restriction $\phi|_{tL(E)t}:tL(E)t \hookrightarrow L(F)$. Since $\phi(t)=t$, we have that $\phi|_{tL(E)t}$ is indeed a monomorphism from $tL(E)t$ to $
tL(F)t$, so that we only need to see that this restriction is onto.

The corner algebra $tL(F)t$ is the linear span of the monomials of the form $pq^*$ where $r(p)=r(q)$ and both $p$ and $q$ are paths in $F$ that begin at any vertex $v_l$
with $l \in A_x$.  (In particular, $v_l \in E^0$.)  Note that any path $p$ having this property must be of the form $p_1\dots p_r f_1\dots f_{j-1}$ where $p_n$ are either edges already in $E$ or
new paths in $F$ of the form $f_1\dots f_{h-1}g_h$, and $f_m$ are edges along a tail. Any such $p_n$ is  in the image of
$\phi$ by definition. So it is enough to show that $(f_1\dots f_{j-1})((f')^*_{j'-1}\dots (f')^*_1)$ is in the image of $\phi$, since these are the only expressions in $L(F)$ which start and end at a vertex of $E$.

But this is done exactly as in the proof of \cite[Propostion 5.2]{AA3}; we give the essential details of the argument here for completeness.     (Alternately, we could also use  the construction given in \cite[Lemma 6.7]{T2} to achieve the same result.)  First note that for this element to be nonzero it must be the case that $j=j'$ and $f_m=f'_m$ for every $m\leq j$.  Now we have:
\begin{eqnarray}
 (f_1\dots f_{j-1})(f^*_{j-1}\dots f^*_1) &=& (f_1\dots f_{j-2})(v_{j-2}-g_{j-1}g^*_{j-1})(f^*_{j-2}\dots f^*_1)     \nonumber \\
   &=& (f_1\dots f_{j-2})(f^*_{j-2}\dots f^*_1)-(f_1\dots f_{j-2}g_{j-1})(g^*_{j-1}f^*_{j-2}\dots f^*_1) \nonumber \\
   & = & \nonumber \hdots
\end{eqnarray}
If we continue this process of replacing $f_mf_m^*$ by $v_{m-1} - g_mg_m^*$, we reach an expression of the form
$$v_0-g_1g^*_1-\sum_{i=2}^{j-1} (f_1\dots f_{i-1}g_i)(g^*_i f^*_{i-1}\dots f^*_1),$$
which we see is precisely
$$\phi(v
-e_1e^*_1-\sum_{i=2}^{j-1} e_i e^*_i).$$
This shows that $\phi|_{tL(E)t}:tL(E)t \to tL(F)t$ is surjective, and thus an
isomorphism of $K$-algebras. Moreover, these isomorphisms are defined in such a way that the following diagram
commutes whenever $t\leq t'$ in the standard partial order on $T$ (i.e., whenever $tt' = t't = t$ in $T$).
$$\xymatrix{ tL(E)t \ar[rr]^{\phi|_{tL(E)t}} \ar@{^{(}->}[d]^i  &   &  tL(F)t  \ar@{^{(}->}[d]^i \\
             t'L(E)t' \ar[rr]^{\phi|_{t'L(E)t'}} &   & t'L(F)t' }$$
In particular, we then get that the two direct limit rings
$$\mathop{\oalign{\hfill $\limind $\hfill\cr$ \scriptstyle{t \in T} $\cr}} tL(E)t \mbox{\hskip 0.5cm and
\hskip 0.5cm } \mathop{\oalign{\hfill $\limind $\hfill\cr$ \scriptstyle{t \in T} $\cr}} tL(F)t $$ are
isomorphic. But the first of these rings is just $L(E)$, since  $T$ is a set of local
units for $L(E)$.  Thus we have shown that
$$\mathop{\oalign{\hfill $\limind $\hfill\cr$ \scriptstyle{t \in T} $\cr}} tL(F)t \cong L(E).$$
Now suppose $w_0$ is an infinite emitter in $E$. Let $w_i$ be any vertex in $F$ which arises in the tail added at $w_0$, and let $p_i$ denote the path $p_i = f_1f_2\cdots f_i$ in $F$ having $r(p_i) = w_i$.  Define $\rho_i:
L(F)w_i \rightarrow L(F)w_0$ by $x \mapsto xp_i^*$, and define $\pi_i: L(F)w_0 \rightarrow L(F)w_i$ by $y \mapsto
yp_i$.  Then $\rho_i$ and $\pi_i$ are left $L(F)$-module homomorphisms, and, since $p_i^* p_i = w_i$, we conclude that
$L(F)w_i$ is isomorphic to a direct summand of $L(F)w_0$ as left $L(F)$-modules.

Since $L(F)\cong \bigoplus_{v\in F^0}L(F)v$ as left $L(F)$-modules, and $L(F)$ is a generator for $L(F)-Mod$, the
previous paragraph demonstrates that the $L(F)$-module $\bigoplus_{v\in E^0}L(F)v \cong \mathop{\oalign{\hfill $\limind
$\hfill\cr}}_{t \in T} L(F)t$ is in fact a generator for $L(F)-Mod$.

We now apply \cite[Theorem 2.5]{AM} to conclude that the rings $\mathop{\oalign{\hfill $\limind $\hfill\cr}}_{t\in
T} {\rm End}_{L(F)}(L(F)t)$ and $L(F)$ are Morita equivalent. But ${\rm End}_{L(F)}(L(F)t)\cong tL(F)t$, so that by
the previously displayed isomorphism we have that $L(F)$ and $L(E)$ are Morita equivalent, and we are done.
\end{proof}

\begin{remark}
{\rm
 Theorem \ref{MainTheorem} yields that if  $E$ contains an uncountable emitter, then  $E$ admits no row-{\it finite} equivalent.  In fact, more can be said:  if $E$ contains an uncountable emitter, then $E$ admits no row-{\it countable} equivalent, since if $L(E)$ is Morita equivalent to $L(F)$ for the row-countable graph $F$, then using the row-finite equivalent $G$ for $F$ guaranteed by Theorem \ref{MainTheorem} we would have $L(E)$ Morita equivalent to $L(G)$, which cannot happen by Proposition \ref{onedirection}.
}
\end{remark}

An historical comment is in order here.   For the graph C$^*$-algebraists, the non-existence of sinks in a graph has played an important role.    Thus the analog of the aforementioned  ``trading-in" process in the context of C$^*$-algebras seeks to  trade in an unrestricted graph for a graph that is not only row-finite, but contains no sinks as well; in other words, a graph which contains no singular vertices.   Rephrased, the C$^*$-algebraists are interested in a \emph{desingularized equivalent} of a graph, which for notational convenience is simply referred to as a \emph{desingularization} of a graph.   With this as context, we make the following definition.

\begin{definition}
 {\rm If $v_0$ is a sink in a graph $E$, then by \emph{adding a tail at $v_0$} we mean attaching a graph of
the form
$$\xymatrix{ {\bullet}^{v_0} \ar[r] & {\bullet}^{v_1} \ar[r] & {\bullet}^{v_2} \ar[r] & {\bullet}^{v_3} \ar@{.>}[r] & }$$
to $E$ at $v_0$.
}
\end{definition}

By using exactly the same ideas as those presented in the proof of Theorem \ref{MainTheorem}, we see that  if we start with a row-countable graph $E$, and build a graph $F$ by adding a tail at every infinite emitter {\it and} every sink, then $F$ contains no singular vertices, and   $L(E)$ is Morita equivalent to $L(F)$.    Thus we may somewhat strengthen the statement of  Theorem \ref{MainTheorem}, as follows.

\begin{theorem}\label{threeconditionstheorem}
Let $E$ be an unrestricted graph. The following are equivalent:
   \begin{enumerate}
   \item  $E$ admits a row-finite equivalent.
    \item $E$ admits a desingularization; that is, there exists a row-finite graph $F$ having no sinks for which $L(E)$ and $L(F)$ are Morita equivalent.
     \item $E$ is row-countable.
     \end{enumerate}
\end{theorem}

\begin{remark}
{\rm
Historically, Step 2 of the aforementioned three-step Procedure has been completed by using a desingularization of $E$.  In fact, Step 2 may be completed by using any  row-finite equivalent of $E$, desingularized or not.
}
\end{remark}

\end{document}